\theoremstyle{plain}
\newtheorem{theorem}{Theorem}[section]
\newtheorem{corollary}[theorem]{Corollary}
\newtheorem{lemma}[theorem]{Lemma}
\theoremstyle{definition}
\theoremstyle{remark}
\newtheorem{remark}{Remark}[section]
\numberwithin{equation}{section}
\numberwithin{table}{section}
\numberwithin{figure}{section}
\newcommand{\real}{\mathbb{R}}
\title[An Optimal Two Parameter Bounds for the Identric Mean]
{An Optimal Two Parameter Bounds\\
 for the Identric Mean}
\author{Omran Kouba}
\address{Department of Mathematics \\
Higher Institute for Applied Sciences and Technology\\
P.O. Box 31983, Damascus, Syria.}
\email{omran\_kouba@hiast.edu.sy}
\keywords{Arithmetic Mean, Geometric Mean, Harmonic Mean, Identric Mean}
\subjclass[2000]{26E60, 26D07.}
\begin{document}

\begin{abstract}
In this note we obtain sharp bounds for the identric mean in terms
of a two parameter family of means. Our results 
generalize and extend recent bounds due to Y. M. Chu \& al. (2011), and
to M.-K. Wang \&  al. (2012).
\end{abstract}

\maketitle

\section{Introduction }\label{sec1}

Given two distinct positive real numbers $a$ and $b$, we recall that the arithmetic mean $A(a,b)$, the geometric mean
$G(a,b)$, the harmonic mean $H(a,b)$, and the identric mean $I(a,b)$,  are respectively defined by
\[
A(a,b)=\frac{a+b}{2},~~ G(a,b)=\sqrt{ab},~~H(a,b)=\frac{2ab}{a+b},
~~ I(a,b)=\frac{1}{e}\left(\frac{a^a}{b^b}\right)^{1/(a-b)}.
\]

\medskip
Inequalities relating means in two arguments have attracted and continue to attract the attention of mathematicians.
Many recent papers  were concerned in comparing these  means. 

\medskip

For instance, H. Alzer and S. Qui
considered in \cite{alqu} the following inequality relating the identric, geometric and arithmetic means :
\[
\alpha A(a,b)+(1-\alpha)G(a,b) < I(a,b)<\beta A(a,b)+(1-\beta)G(a,b),
\]
they proved that it holds, for every distinct positive numbers $a$ and $b$,  if and only if $\alpha \leq 2/3$ and $\beta\geq 2/e$.

\medskip

This was later complemented by T. Trif \cite{trif} who proved that, for $p\geq2$ and every
distinct positive numbers $a$ and $b$, we have
\[
\alpha A^p(a,b)+(1-\alpha)G^p(a,b) < I^p(a,b)<\beta A^p(a,b)+(1-\beta)G^p(a,b),
\]
if and only if $\alpha \leq (2/e)^p$ and $\beta\geq 2/3$.

\medskip

In another direction we proved in \cite{kou} that the inequality
\[ 
I^p(a,b)<\frac{2}{3}A^p(a,b)+\frac{1}{3}G^p(a,b) 
\]
holds true for every distinct positive numbers $a$ and $b$, if and only if $p\geq\ln\left(\frac{3}{2}\right)/\ln\left(\frac{e}{2}\right)\approx 1.3214$, and that the reverse inequality holds true for every distinct positive numbers $a$ and $b$, if and only if $p\leq6/5= 1.2$.

\medskip

In this paper we consider the two parameter family  of means $Q_{t,s}(a,b)$, defined for $s\geq1$ and $t\in[0,1/2]$, 
by
\begin{equation}\label{eq:int01}
Q_{t,s}(a,b)=G^s(t a+(1-t)b,tb+(1-t)a)A^{1-s}(a,b).
\end{equation}

\bigskip\goodbreak
Similar means were previously considered  by several authors. For instance 
\[Q_{t,2}(a,b)=H(t a+(1-t)b,tb+(1-t)a)\]
was considered in by  Y.-M. Chu, M.-K. Wang and Z.-K. Wang in \cite{chu1} where it was compared to the identric mean. 
The same authors compared also
\[
Q_{t,1}(a,b)=G(t a+(1-t)b,tb+(1-t)a)
\] 
to the identric mean in their recent work \cite{chu2}.

\medskip

We will see later that, for distinct positive real numbers $a$ and $b$, the function $t\mapsto Q_{t,s}(a,b)$ is 
continuous and increasing. Moreover, for  $s\geq1$ and every distinct positive numbers $a$ and $b$, we have 
\[
Q_{0,s}(a,b)\leq Q_{0,1}(a,b)=G(a,b)<I(a,b)<A(a,b)=Q_{1/2,s}(a,b).
\]
Therefore, it is natural to consider, for $s\geq1$, the sets
\begin{align*}
{\mathcal L}_s&=\left\{t\in[0,1/2]:\hbox{ for all positive $a,b$ with $a\ne b$,}~ Q_{t,s}(a,b)<I(a,b)\right\},\\
{\mathcal U}_s&=\left\{t\in[0,1/2]:\hbox{ for all positive $a,b$ with $a\ne b$,}~ I(a,b)<Q_{t,s}(a,b)\right\}.
\end{align*}

\medskip

 Using
the fact that $t\mapsto Q_{t,s}(a,b)$ is  increasing, we see that ${\mathcal L}_s$ and ${\mathcal U}_s$ are intervals.

\medskip

In this work, (see  Theorem~\ref{th31}), we will determine in terms of $s\geq1$, the values $p_s\in(0,1/2)$
and $q_s\in(0,1/2)$ such that ${\mathcal L}_s=[0,p_s]$ and ${\mathcal U}_s=[q_s,1/2]$. These results extend those of  Y.-M. Chu \& al. \cite{chu1} and  M.-K. Wang \&  al.  \cite{chu2}, with
simpler and unified proofs.

\bigskip

\section{ Preliminaries }\label{sec2}

The following lemmas pave the way to the main theorem. 
In the next Lemma~\ref{lm22} we study a family of functions, using simple methods from classical analysis.

\bigskip
\begin{lemma}\label{lm22}
For $s\geq 1$ and $u\in[0,1]$, we consider the real function $f_{u,s}$ defined on $[0,1)$ by
\begin{equation}\label{eq:e21}
f_{u,s}(x)=1-\frac{1}{2x}\ln\left(\frac{1+x}{1-x}\right)-\frac{1}{2}\ln(1-x^2)+\frac{s}{2}\ln(1-ux^2).
\end{equation}
\begin{enumerate}[\upshape(a)]
\item  The necessary and sufficient condition to have $f_{u,s}(x)>0$ for $x\in(0,1)$, is that $3su\leq 1$.
\item  The necessary and sufficient condition to have $f_{u,s}(x)<0$ for $x\in(0,1)$, is that $u+(2/e)^{2/s}\geq 1$.
\end{enumerate}
\end{lemma}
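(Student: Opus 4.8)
The plan is to expand $f_{u,s}$ as a power series in $x$ and read off the signs of its coefficients. Using $\frac{1}{2x}\ln\frac{1+x}{1-x}=\sum_{n\ge 0}\frac{x^{2n}}{2n+1}$ together with the series for $\ln(1-x^2)$ and $\ln(1-ux^2)$, the constant terms cancel and one obtains
\[
f_{u,s}(x)=\sum_{n\ge 1}\frac{1}{2n}\left(\frac{1}{2n+1}-su^n\right)x^{2n},\qquad x\in(0,1).
\]
This already settles part (a). For necessity, the leading coefficient is $\tfrac12(\tfrac13-su)$, so if $3su>1$ then $f_{u,s}(x)<0$ for small $x>0$; hence $3su\le1$ is necessary. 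For sufficiency, if $3su\le1$ then $u\le\tfrac{1}{3s}\le\tfrac13$, whence $su^n=(su)u^{n-1}\le 3^{-n}\le\tfrac{1}{2n+1}$ (using $3^n\ge 2n+1$), so every coefficient is nonnegative; since the $n=2$ coefficient is strictly positive, $f_{u,s}(x)>0$ on $(0,1)$.

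For part (b) I would first reduce the range of $u$. Since $\partial_u f_{u,s}(x)=-\frac{sx^2}{2(1-ux^2)}<0$, the function $f_{u,s}(x)$ is strictly decreasing in $u$, so it suffices to treat the smallest admissible value $u_s:=1-(2/e)^{2/s}$, the inequality for larger $u$ then following at once. Next I would compute the boundary value: as $x\to1^-$ the two logarithmic singularities cancel and
\[
\lim_{x\to1^-}f_{u,s}(x)=1-\ln2+\tfrac{s}{2}\ln(1-u)\qquad(u<1).
\]
Requiring this limit to be $\le0$ yields exactly $u+(2/e)^{2/s}\ge1$, which proves necessity; moreover at $u=u_s$ this limit equals $0$. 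The case $u=1$ is immediate, since then every coefficient $\frac{1}{2n+1}-s$ is negative.

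It remains to prove $f_{u_s,s}(x)<0$ on $(0,1)$, and this global sign control is the main obstacle, since the coefficients above are no longer all of one sign. I would show that $f_{u_s,s}$ is unimodal. Writing $f'_{u,s}(x)=x\sum_{n\ge1}d_n\,x^{2(n-1)}$ with $d_n=\frac{1}{2n+1}-su^n$, the sign of $d_n$ is that of $1-(2n+1)su^n$. Setting $\rho_n=(2n+1)su_s^{\,n}$, one has $\rho_{n+1}/\rho_n=u_s\frac{2n+3}{2n+1}$; since $u_s$ is decreasing in $s$ and hence $u_s\le u_1=1-4e^{-2}<\tfrac35$ for all $s\ge1$, this ratio is $<1$, so $\rho_n$ is strictly decreasing with $\rho_1=3su_s>1$ and $\rho_n\to0$. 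Thus $(d_n)$ is first negative and then positive, changing sign exactly once. A power series in $y=x^2$ whose coefficients change sign once (from $-$ to $+$) has at most one zero in $(0,1)$: dividing by a suitable power of $y$ turns it into a sum of nondecreasing functions of $y$, hence nondecreasing. Consequently $f'_{u_s,s}$ changes sign at most once on $(0,1)$, and only from $-$ to $+$.

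Finally I would assemble the pieces. Near $0$ we have $f'_{u_s,s}<0$ because $d_1<0$, so either $f'_{u_s,s}<0$ throughout $(0,1)$, or it is first negative and then positive. In the first case $f_{u_s,s}$ is strictly decreasing, so $f_{u_s,s}(x)<f_{u_s,s}(0)=0$. In the second case $f_{u_s,s}$ descends to a unique interior minimum and then increases back toward its limit $0$ at $x=1$; as both endpoint values are $0$, it stays strictly negative on $(0,1)$. In either case $f_{u_s,s}<0$ on $(0,1)$, and the monotonicity in $u$ then completes part (b).
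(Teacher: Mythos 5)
Your argument is correct, and it takes a genuinely different route from the paper's. The paper differentiates twice: it writes $f_{u,s}'(x)=h_{u,s}(x)/x^2$ and reduces the sign of $h_{u,s}'$ to that of a quadratic trinomial $T_{u,s}(x^2)$, read off from its values at $0$ and $1$; this yields the same ``decreasing, or decreasing-then-increasing'' shape for $f_{u,s}$ that you obtain. You instead expand $f_{u,s}(x)=\sum_{n\ge1}\frac{1}{2n}\left(\frac{1}{2n+1}-su^n\right)x^{2n}$ and work on the coefficients. This makes part (a) essentially immediate (the estimate $su^n\le 3^{-n}\le\frac{1}{2n+1}$ is arguably cleaner than the paper's trinomial analysis), and in part (b) the reduction to the single value $u=u_s$ by monotonicity in $u$, the endpoint limit $1-\ln 2+\frac{s}{2}\ln(1-u)$, and the Descartes-type one-sign-change lemma for the coefficients of $f_{u,s}'$ together recover the paper's conclusion. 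The individual steps check out: the series, the limit as $x\to1^-$, the monotonicity of $\rho_n$ via $u_s\le 1-4e^{-2}<3/5$, and the ``divide by a power of $y$'' argument are all sound.

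The one statement you invoke without proof is $\rho_1=3su_s>1$, i.e.\ $d_1<0$; your entire sign analysis at $u=u_s$ starts from it, so it needs a justification. It is true: setting $c=1-\ln 2$ and $w=2c/s\in(0,2c]$, one has $3su_s=3s\left(1-e^{-w}\right)=6c\cdot\frac{1-e^{-w}}{w}$, and since $w\mapsto(1-e^{-w})/w$ is decreasing, the minimum over $s\ge1$ is attained at $s=1$ and equals $3(1-4e^{-2})\approx 1.376>1$. (Alternatively, argue by contradiction: if $3su_s\le1$ then $\rho_n<1$ for all $n\ge2$, so $d_1\ge0$ and $d_n>0$ for $n\ge2$, whence $f_{u_s,s}$ is strictly increasing on $(0,1)$ with $f_{u_s,s}(0^+)=0$, which is incompatible with $\lim_{x\to1^-}f_{u_s,s}(x)=0$.) To be fair, the paper relies on the same inequality implicitly when it asserts that the negativity condition is exactly $u+(2/e)^{2/s}\ge1$ rather than the conjunction of that with $3su>1$; but in your write-up the claim is explicit and load-bearing, so supply the missing line.
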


\smallskip
\begin{proof}
We consider only the case $u\in(0,1]$, since $f_{0,s}$ is independent of $s$ and positive on $(0,1)$. It is straightforward to see that $f_{u,s}^\prime(x)=h_{u,s}(x)/x^2$ where
\[
h_{u,s}(x)=-x+\frac{1}{2}\ln\left(\frac{1+x}{1-x}\right)-\frac{s u x^3}{1-u x^2}
\]
and that 
\[
h_{u,s}^\prime(x)=\frac{x^2}{(1-x^2)(1-ux^2)^2}\,T_{u,s}(x^2)
\]
where $T_{u,s}$ is the trinomial defined by
\[
T_{u,s}(X)=(1-s)u^2\,X^2-(2-3 s-s u)u\,X+(1-3 s u).
\]

\bigskip\goodbreak

\noindent Noting that $T_{u,s}(1)=(1-u)^2\geq0$ and $T_{u,s}(0)=1-3su$, we see that we have two cases:
\medskip
\begin{itemize} 
\item First, $T_{u,s}(0)\geq0$, or equivalently $3su\leq 1$. Again, we distinguish two cases :
\medskip
\begin{itemize}
\item If $s=1$, then clearly the zero of $T_{u,1}$ does not belong to $(0,1)$ and $T_{u,s}$ has a positive sign on $(0,1)$.
\medskip
\item If $s>1$, then the coefficient of $X^2$ in $T_{u,s}$ is negative, and the fact that both $T_{u,s}(0)$ and
$T_{u,s}(1)$ are nonnegative, implies that $z_0\leq 0<1\leq z_1$ where $z_0$ and $z_1$ are the zeros of $T_{u,s}$.
Hence, $T_{u,s}$ has also a positive sign on $(0,1)$ in this case. 
\end{itemize}

\medskip
\noindent It follows that in this case $h_{u,s}$ is increasing on $[0,1)$. But $h_{u,s}(0)=0$, so  $h_{u,s}$ is positive on
$(0,1)$. This implies that $f_{u,s}$ is increasing on $(0,1)$. Finally, the fact that $\lim_{x\to 0^+}f_{u,s}(x)=0$ implies that
$f_{u,s}(x)>0$ for every $x\in(0,1)$ in this case.

\medskip
\item  Second,  $T_{u,s}(0)<0$, or equivalently $3su> 1$. This means that $T_{u,s}$ has a {\it unique} zero $z_0$ in the interval $(0,1]$, (because $\deg(T_{u,s})\leq 2$ .) 
\medskip
\begin{itemize}
\item 
If $u=1$, then $z_0=1$ and $h_{1,s}$ is decreasing on $[0,1]$. But $h_{1,s}(0)=0$, so $h_{1,s}$
is negative on $(0,1)$. This implies that $f_{1,s}$ is decreasing on $(0,1)$.
Finally, we have $\lim_{x\to 0^+}f_{1,s}(x)=0$ and consequently $f_{1,s}(x)<0$ for every $x\in(0,1)$.
\medskip
\item 
If $u<1$, then $z_0\in(0,1)$. So $h_{u,s}$ is decreasing on $[0,z_0]$ and increasing on $[z_0,1]$.
But $h_{u,s}(0)=0$ so $h_{u,s}(z_0)<0$. On the other hand
$\lim_{x\to 1^-}h_{u,s}(x)=+\infty$. So there exists a unique real number $y_0\in(z_0,1)$ such that $ h_{u,s}(y_0)=0$.
Thus $ h_{u,s}(x)<0$ for $x\in(0,y_0)$ and $ h_{u,s}(x)>0$ for $x\in(y_0,1)$. This implies that $f_{u,s}$ is decreasing on $(0,y_0)$ and increasing on $(y_0,1)$. Finally we have $\lim_{x\to 0^+}f_{u,s}(x)=0$ and $\lim_{x\to 1^-}f_{u,s}(x)=\ln\left(e(1-u)^{s/2}/2\right)$.
\end{itemize}
\medskip
\noindent This shows that the necessary and sufficient condition for $f_{u,s}$ to be negative on $(0,1)$ is that 
$u=1$ or  $u<1$ and $\ln\left(e(1-u)^{s/2}/2\right)\leq0$ which is equivalent to the condition $1\leq u+(2/e)^{2/s}$.
\end{itemize}
\medskip
\noindent This achieves the proof of Lemma~\ref{lm22}.
\end{proof}

\bigskip

Next we introduce the set ${\mathcal D}$ defined as follows :
\begin{equation*}
{\mathcal D}=\left\{(a,b)\in\real^2:a>b>0\right\}.
\end{equation*}

\noindent It is sufficient to consider couples $(a,b)$ from ${\mathcal D}$, since 
the considered means are symmetric functions of their arguments.
The next Lemma~\ref{lm21} explains why the family of functions studied in  Lemma~\ref{lm22} is important to our 
study. 

\begin{lemma}\label{lm21} Consider $(a,b)\in {\mathcal D}$ and  let $v=\frac{a-b}{a+b}$.

\begin{enumerate}[\upshape(a)]
\item For  $s\geq1$ and $t\in[0,1/2]$, we have
\[
\ln\left(\frac{Q_{t,s}(a,b)}{A(a,b)}\right)=\frac{s}{2}\ln\left(1-(1-2t)^2 v^2\right).
\]
\item Also, for the identric mean we have
\[
\ln\left(\frac{I(a,b)}{A(a,b)}\right)=-1+\frac{1}{2}\ln(1-v^2)+\frac{1}{2v}\ln\left(\frac{1+v}{1-v}\right).
\]
\end{enumerate}
\end{lemma}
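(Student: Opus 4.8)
The plan is to exploit the homogeneity of both sides: every mean appearing is homogeneous of degree one in $(a,b)$, so each log-ratio depends only on the normalized variable $v=\frac{a-b}{a+b}$. Writing $A=A(a,b)$, the cleanest device is the substitution
\[
a=A(1+v),\qquad b=A(1-v),
\]
which is legitimate because $a+b=2A$ and $a-b=2Av$ on ${\mathcal D}$, so that $v\in(0,1)$. Both parts then reduce to direct computation.

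For part (a), the key observation is that the two arguments of the geometric mean in \eqref{eq:int01}, namely $x=ta+(1-t)b$ and $y=tb+(1-t)a$, have the \emph{same} arithmetic mean as $a$ and $b$, since $x+y=a+b=2A$, while their difference is $x-y=(2t-1)(a-b)$. Using the elementary identity $xy=\left(\tfrac{x+y}{2}\right)^2-\left(\tfrac{x-y}{2}\right)^2$ together with $\tfrac{a-b}{2}=vA$, I would get
\[
G^2(x,y)=xy=A^2-(1-2t)^2\left(\tfrac{a-b}{2}\right)^2=A^2\bigl(1-(1-2t)^2v^2\bigr).
\]
Since $Q_{t,s}(a,b)/A(a,b)=G^s(x,y)/A^s$, raising to the power $s/2$ and taking logarithms gives the stated formula.

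For part (b), I would take the logarithm of the definition of $I$, obtaining
\[
\ln\!\left(\frac{I(a,b)}{A(a,b)}\right)=-1+\frac{a\ln a-b\ln b}{a-b}-\ln A.
\]
Substituting $a=A(1+v)$ and $b=A(1-v)$, the factors of $\ln A$ cancel (using $a-b=2Av$), leaving
\[
\ln\!\left(\frac{I(a,b)}{A(a,b)}\right)=-1+\frac{(1+v)\ln(1+v)-(1-v)\ln(1-v)}{2v}.
\]
The one slightly delicate step is to rewrite the numerator by splitting it into its symmetric and antisymmetric parts in $v$:
\[
(1+v)\ln(1+v)-(1-v)\ln(1-v)=\ln\!\left(\frac{1+v}{1-v}\right)+v\ln(1-v^2),
\]
after which dividing by $2v$ yields exactly the claimed expression. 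The whole lemma is computational; the only place demanding care is this last regrouping, so I expect no genuine obstacle beyond keeping the algebra tidy.
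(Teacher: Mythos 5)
Your proposal is correct and follows essentially the same route as the paper: part (a) is the identity $G^2(x,y)=A^2\bigl(1-(1-2t)^2v^2\bigr)$ obtained from $x+y=a+b$ and $x-y=(2t-1)(a-b)$, and part (b) is the direct computation of $\ln\bigl(I(a,b)/A(a,b)\bigr)$ after normalizing by $A$, with the paper carrying out the same symmetric/antisymmetric regrouping multiplicatively rather than at the level of logarithms. No gaps.
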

\begin{proof} 
Indeed, (a) follows from the simple fact that 
\[
G(ta+(1-t)b,tb+(1-t)a)=A(a,b)\sqrt{1-(1-2t)^2\left(\frac{a-b}{a+b}\right)^2}
\]

\noindent To see (b) we note that
\begin{align*}
\frac{I(a,b)}{A(a,b)}&=\frac{1}{e}\frac{2}{a+b}a^{a/(a-b)}b^{-b/(a-b)}
=\frac{1}{e}\left(\frac{2a}{a+b}\right)^{\frac{a}{a-b}}\left(\frac{2b}{a+b}\right)^{\frac{-b}{a-b}}\\
&=\frac{1}{e}\left(1+\frac{a-b}{a+b}\right)^{\frac{1}{2}+\frac{a+b}{2(a-b)}}
\left(1-\frac{a-b}{a+b}\right)^{\frac{1}{2}-\frac{a+b}{2(a-b)}}\\
&=\frac{1}{e}\big(1+v \big)^{\frac{1+v}{2v}}
\big(1-v\big)^{\frac{v-1}{2v}}.
\end{align*}
\noindent This concludes the proof of Lemma~\ref{lm22}.
\end{proof}

\bigskip

\begin{remark}\label{rm23}
In particular, it follows from Lemma~\ref{lm22} (a), that the function
$t\mapsto Q_{t,s}(a,b) $ is continuous and increasing as announced in the introduction.
\end{remark}

\begin{remark}\label{rm24}
Combining (a) and (b) from Lemma~\ref{lm22}, we see immediately that
 if $f_{u,s}$ is the function defined in Lemma ~\ref{lm22} then, for every $(a,b)\in{\mathcal D}$ we have
\[
\ln\left(\frac{Q_{t,s}(a,b)}{I(a,b)}\right)=f_{(1-2t)^2,s}\left(\frac{a-b}{a+b}\right),
\]
and this explains the importance of the family of functions studied in  Lemma~\ref{lm22} to our 
study. 
\end{remark}

\bigskip
\section{The Main Theorem }\label{sec3}

\medskip

\begin{theorem}\label{th31}
Let $s$ be a real number such that $s\geq1$, and define the sets
\begin{align*}
{\mathcal L}_s&=\left\{t\in [0,1/2]  : \forall\,(a,b)\in {\mathcal D},~ Q_{t,s}(a,b)<I(a,b)\right\},\\
{\mathcal U}_s&=\left\{t\in [0,1/2] : \forall\,(a,b)\in {\mathcal D},~ I(a,b)<Q_{t,s}(a,b)\right\}.
\end{align*}
Then 
\[
{\mathcal L}_s=\left[0,\frac{1}{2}-\frac{1}{2}\sqrt{1-\left(\frac{2}{e}\right)^{2/s}}\right]
\quad\hbox{and}\quad
{\mathcal U}_s=\left[\frac{1}{2}-\frac{1}{2\sqrt{3s}},\frac{1}{2}\right].
\]
\end{theorem}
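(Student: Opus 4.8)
The plan is to reduce everything to the single-variable analysis already carried out in Lemma~\ref{lm22}, using the explicit identity of Remark~\ref{rm24} as the bridge. Fix $s\geq1$ and $t\in[0,1/2]$, and set $u=(1-2t)^2$, so that $u\in[0,1]$. For any $(a,b)\in\mathcal{D}$ put $v=\frac{a-b}{a+b}$; then $v$ ranges over the whole open interval $(0,1)$ as $(a,b)$ ranges over $\mathcal{D}$, and Remark~\ref{rm24} gives the key identity
\[
\ln\left(\frac{Q_{t,s}(a,b)}{I(a,b)}\right)=f_{u,s}(v).
\]

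First I would use this identity to rewrite the two defining conditions as sign conditions on $f_{u,s}$. Since $\ln$ is increasing, the inequality $Q_{t,s}(a,b)<I(a,b)$ is equivalent to $f_{u,s}(v)<0$, and $I(a,b)<Q_{t,s}(a,b)$ is equivalent to $f_{u,s}(v)>0$. Because $v$ sweeps all of $(0,1)$, the membership $t\in\mathcal{L}_s$ is therefore equivalent to the statement ``$f_{u,s}(x)<0$ for every $x\in(0,1)$'', and $t\in\mathcal{U}_s$ is equivalent to ``$f_{u,s}(x)>0$ for every $x\in(0,1)$''.

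Next I would invoke Lemma~\ref{lm22} directly. By part (b), $f_{u,s}<0$ on $(0,1)$ holds exactly when $u+(2/e)^{2/s}\geq1$, i.e. when $u\geq1-(2/e)^{2/s}$; and by part (a), $f_{u,s}>0$ on $(0,1)$ holds exactly when $3su\leq1$, i.e. when $u\leq\frac{1}{3s}$. It then remains only to translate these conditions on $u=(1-2t)^2$ back into conditions on $t$. Since $t\in[0,1/2]$ forces $1-2t\geq0$, taking the nonnegative square root is order-preserving: $(1-2t)^2\geq1-(2/e)^{2/s}$ is equivalent to $1-2t\geq\sqrt{1-(2/e)^{2/s}}$, that is $t\leq\frac{1}{2}-\frac{1}{2}\sqrt{1-(2/e)^{2/s}}$; and $(1-2t)^2\leq\frac{1}{3s}$ is equivalent to $1-2t\leq\frac{1}{\sqrt{3s}}$, that is $t\geq\frac{1}{2}-\frac{1}{2\sqrt{3s}}$. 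This yields the two claimed closed intervals, the endpoints being included because the inequalities in Lemma~\ref{lm22} are themselves non-strict (the boundary values $u=1-(2/e)^{2/s}$ and $u=\frac{1}{3s}$ satisfy the respective conditions).

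There is essentially no obstacle left once Lemma~\ref{lm22} is in hand, since all the analytic difficulty was absorbed there; the only points demanding care are purely bookkeeping. One must check that $v$ genuinely attains every value in $(0,1)$ so that the ``for all $(a,b)$'' quantifier matches the ``for all $x\in(0,1)$'' hypothesis of the lemma, which holds because $b/a$ ranges over $(0,1)$, and one must keep the square-root step order-preserving by exploiting $1-2t\geq0$. I would also note in passing that both endpoints lie strictly inside $(0,1/2)$: since $0<(2/e)^{2/s}<1$ and $s\geq1$ give $0<1-(2/e)^{2/s}<1$ and $0<\frac{1}{3s}\leq\frac13$, confirming that $p_s$ and $q_s$ belong to $(0,1/2)$ as announced in the introduction.
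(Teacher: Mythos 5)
Your proposal is correct and follows essentially the same route as the paper: both reduce membership in $\mathcal{L}_s$ and $\mathcal{U}_s$ to the sign conditions of Lemma~\ref{lm22} via the identity of Remark~\ref{rm24}, using that $v=(a-b)/(a+b)$ sweeps all of $(0,1)$, and then translate the conditions on $u=(1-2t)^2$ back to $t$ using $1-2t\geq0$. Your added remarks on the order-preserving square root and on the endpoints lying in $(0,1/2)$ are accurate but not needed beyond what the paper already does.
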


\begin{proof} First note that
\[
\left\{\frac{a-b}{a+b}:(a,b)\in{\mathcal D}\right\}=\big(0,1\big).
\]
\noindent So, using Remark~\ref{rm24} we see that $t\in{\mathcal L}_s$ if and only if 
$f_{(1-2t)^2,s}(x)<0$ for every $x\in(0,1)$. Using Lemma~\ref{lm22} we see that this is equivalent to
$(1-2t)^2+(2/e)^{2/s}\geq 1$ or $(1-\sqrt{1-(2/e)^{2/s}})/2\geq t$. This proves that
\[
{\mathcal L}_s=\left[0,\frac{1}{2}-\frac{1}{2}\sqrt{1-\left(\frac{2}{e}\right)^{2/s}}\right].
\] 

\medskip

\noindent Similarly using Remark~\ref{rm24} we see that $t\in{\mathcal U}_s$ if and only if 
$f_{(1-2t)^2,s}(x)>0$ for every $x\in(0,1)$. Using Lemma~\ref{lm22}  again we see that this is equivalent to
$3s(1-2t)^2\leq 1$ or $(1-1/\sqrt{3s})/2\leq t$. This proves that
\[
{\mathcal U}_s=\left[\frac{1}{2}-\frac{1}{2\sqrt{3s}},\frac{1}{2}\right].
\] 

\noindent The proof of Theorem~\ref{th31} is complete.
\end{proof}
\bigskip

The following two corollaries correspond to the particular cases $s=2$ and $s=1$.
They give  the bounds obtained in \cite{chu1} and \cite{chu2}. 

\bigskip
\begin{corollary}[see \cite{chu1}]\label{cor33}
The necessary and sufficient condition on $p,q$ from $[0,1/2]$
 to have 
\[
H(pa+(1-p)b,pb+(1-p)a)<I(a,b)<H(qa+(1-q)b,qb+(1-q)a)
\]
 for every distinct positive numbers $a$ and $b$, is that 
\[
p\leq \frac{1-\sqrt{1-2/e}}{2}\qquad\hbox{ and }\qquad
q\geq \frac{6-\sqrt{6}}{12}.
\]
\end{corollary}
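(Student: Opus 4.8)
The plan is to recognize this corollary as the specialization $s=2$ of Theorem~\ref{th31} and to supply the short bridge that turns $Q_{t,2}$ into the harmonic-mean expression appearing in the statement.

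First I would establish the identity $Q_{t,2}(a,b)=H(ta+(1-t)b,\,tb+(1-t)a)$. To this end I would use the elementary relation $H(x,y)=G^2(x,y)/A(x,y)$, valid for all positive $x,y$, together with the observation that the substitution leaves the arithmetic mean invariant, namely $A(ta+(1-t)b,\,tb+(1-t)a)=A(a,b)$. Applying these with $x=ta+(1-t)b$ and $y=tb+(1-t)a$ gives
\[
Q_{t,2}(a,b)=G^2(ta+(1-t)b,\,tb+(1-t)a)\,A^{-1}(a,b)=H(ta+(1-t)b,\,tb+(1-t)a),
\]
as desired. With this identity in hand, the left inequality of the corollary holding for all $(a,b)\in\mathcal D$ is exactly the assertion $p\in\mathcal L_2$, and the right inequality is exactly $q\in\mathcal U_2$.

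Next I would invoke Theorem~\ref{th31} at $s=2$, which yields $\mathcal L_2=[\,0,\tfrac12-\tfrac12\sqrt{1-2/e}\,]$ and $\mathcal U_2=[\,\tfrac12-\tfrac1{2\sqrt6},\tfrac12\,]$. The membership $p\in\mathcal L_2$ is then precisely the condition $p\le(1-\sqrt{1-2/e})/2$ stated in the corollary. For the upper bound it only remains to rewrite the left endpoint of $\mathcal U_2$ in the claimed form; rationalizing,
\[
\frac12-\frac1{2\sqrt6}=\frac{\sqrt6-1}{2\sqrt6}=\frac{6-\sqrt6}{12},
\]
so $q\in\mathcal U_2$ is equivalent to $q\ge(6-\sqrt6)/12$.

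I expect no genuine obstacle here: the whole argument is a direct substitution $s=2$ into the main theorem, and the only points requiring any care are the bookkeeping identity $Q_{t,2}=H(\cdots)$ and the rationalization of $1/(2\sqrt6)$. In particular, all of the analytic content—the sign analysis of the auxiliary family $f_{u,s}$ carried out in Lemma~\ref{lm22}—has already been done and is inherited through Theorem~\ref{th31}.
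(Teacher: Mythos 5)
Your proposal is correct and follows exactly the route the paper intends: the paper notes the identity $Q_{t,2}(a,b)=H(ta+(1-t)b,\,tb+(1-t)a)$ in the introduction and presents the corollary as the immediate specialization $s=2$ of Theorem~\ref{th31}. Your verification of that identity via $H(x,y)=G^{2}(x,y)/A(x,y)$ and the invariance of the arithmetic mean, together with the rationalization $\tfrac12-\tfrac1{2\sqrt6}=(6-\sqrt6)/12$, supplies precisely the bookkeeping the paper leaves implicit.
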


\bigskip
\begin{corollary}[see \cite{chu2}]\label{cor34}
The necessary and sufficient condition on $p,q$ from $[0,1/2]$
 to have 
\[
G(pa+(1-p)b,pb+(1-p)a)<I(a,b)<G(qa+(1-q)b,qb+(1-q)a)
\]
for every distinct positive numbers $a$ and $b$, is that 
\[
p\leq \frac{1-\sqrt{1-4/e^2}}{2}\qquad\hbox{ and }\qquad
q\geq \frac{3-\sqrt{3}}{6}.
\]
\end{corollary}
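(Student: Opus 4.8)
The plan is to obtain this statement as a direct specialization of Theorem~\ref{th31} to the value $s=1$, so that almost nothing new is required. The first step is to unwind the definition \eqref{eq:int01} at $s=1$: there the factor $A^{1-s}(a,b)$ becomes $A^{0}(a,b)=1$, and hence
\[
Q_{t,1}(a,b)=G\big(ta+(1-t)b,\,tb+(1-t)a\big).
\]
Consequently the two geometric means appearing in the corollary are precisely $Q_{p,1}(a,b)$ and $Q_{q,1}(a,b)$, so the displayed double inequality reads $Q_{p,1}(a,b)<I(a,b)<Q_{q,1}(a,b)$.

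Next I would note that the left inequality depends only on $p$ and the right only on $q$. Since a universal quantifier distributes over a conjunction, the requirement that $Q_{p,1}(a,b)<I(a,b)<Q_{q,1}(a,b)$ hold for every pair of distinct positive $a,b$ is equivalent to the conjunction of ``$Q_{p,1}(a,b)<I(a,b)$ for all such $a,b$'' and ``$I(a,b)<Q_{q,1}(a,b)$ for all such $a,b$''. By the very definitions of $\mathcal{L}_1$ and $\mathcal{U}_1$, this is exactly the statement $p\in\mathcal{L}_1$ and $q\in\mathcal{U}_1$. Here one uses, as in the introduction, that restricting to $(a,b)\in\mathcal{D}$ loses nothing, by symmetry of $G$ and $I$ in their arguments.

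It then remains only to insert $s=1$ into the formulas of Theorem~\ref{th31}. This gives $\mathcal{L}_1=[0,\tfrac12-\tfrac12\sqrt{1-(2/e)^2}\,]$ and $\mathcal{U}_1=[\tfrac12-\tfrac1{2\sqrt3},\tfrac12]$, so that $p\in\mathcal{L}_1$ becomes $p\leq\tfrac12-\tfrac12\sqrt{1-4/e^2}$ and $q\in\mathcal{U}_1$ becomes $q\geq\tfrac12-\tfrac1{2\sqrt3}$.

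The final step is purely arithmetic: rewrite $\tfrac12-\tfrac12\sqrt{1-4/e^2}=\frac{1-\sqrt{1-4/e^2}}{2}$, and rationalize $\tfrac12-\tfrac1{2\sqrt3}=\frac{\sqrt3-1}{2\sqrt3}=\frac{3-\sqrt3}{6}$, which matches the stated bounds. There is no genuine obstacle in this corollary, since Theorem~\ref{th31} already carries all the analytic weight; the only points deserving a word are the observation that $Q_{t,1}$ reduces to the geometric mean of the two weighted averages and the routine rationalization of $\tfrac1{2\sqrt3}$.
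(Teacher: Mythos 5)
Your proof is correct and follows exactly the paper's intended route: the paper presents this corollary as the specialization $s=1$ of Theorem~\ref{th31}, using $Q_{t,1}(a,b)=G(ta+(1-t)b,tb+(1-t)a)$ and the same elementary simplifications $(2/e)^{2/1}=4/e^2$ and $\tfrac12-\tfrac1{2\sqrt3}=\tfrac{3-\sqrt3}{6}$. Nothing is missing.
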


In the next corollary, the lower bound is an inequality due to H.-J. Seiffert \cite{sei}, and can be also found in \cite{san}.
While the upper bound is new and to be compared with the results of J. S\'andor and T. Trif in \cite{san}.

\bigskip
\begin{corollary}\label{cor35}
For every positive numbers $a$ and $b$, we have 
\[
\exp\left(\frac{1}{6}\left(\frac{a-b}{a+b}\right)^2\right)
\leq\frac{A(a,b)}{I(a,b)}\leq\exp\left(\left(\ln\frac{e}{2}\right)\left(\frac{a-b}{a+b}\right)^2\right)
\]
\end{corollary}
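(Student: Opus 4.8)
The plan is to reduce the whole statement to the single-variable function already isolated in the preliminaries, and then to read off the two constants $\tfrac16$ and $\ln\tfrac e2$ as limiting values of the boundary curves in Lemma~\ref{lm22}. By the symmetry of $A$ and $I$ it suffices to treat $(a,b)\in\mathcal D$, and the degenerate case $a=b$ gives equality throughout (both outer terms reduce to $1$); so I set $v=\frac{a-b}{a+b}\in(0,1)$. First I would observe that, putting $u=0$ in~\eqref{eq:e21}, the quantity $g(v):=f_{0,s}(v)=1-\frac{1}{2v}\ln\frac{1+v}{1-v}-\frac12\ln(1-v^2)$ is independent of $s$ and, by Lemma~\ref{lm21}(b), equals $-\ln\!\big(I(a,b)/A(a,b)\big)=\ln\!\big(A(a,b)/I(a,b)\big)$. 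Thus the corollary is equivalent to the two-sided estimate $\frac{v^2}{6}\le g(v)\le\left(\ln\frac e2\right)v^2$ on $(0,1)$.

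For the lower bound I would exploit Lemma~\ref{lm22}(a) exactly at the boundary of its hypothesis. Writing $f_{u,s}(v)=g(v)+\frac s2\ln(1-uv^2)$, the condition $3su\le1$ forces $f_{u,s}(v)>0$, hence $g(v)>-\frac s2\ln(1-uv^2)$. I would then specialise to the boundary curve $su=\frac13$ and let $s\to\infty$ (so $u=\frac1{3s}\to0^+$): a one-line expansion of $\ln(1-uv^2)$ shows $-\frac s2\ln(1-uv^2)\to\frac{su}{2}v^2=\frac{v^2}{6}$. Since $g(v)$ exceeds every member of this family, passing to the limit yields $g(v)\ge\frac{v^2}{6}$.

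The upper bound is entirely parallel, using Lemma~\ref{lm22}(b): its boundary condition $u+(2/e)^{2/s}=1$ makes $f_{u,s}(v)<0$, i.e.\ $g(v)<-\frac s2\ln(1-uv^2)$. Here I would take $u=1-(2/e)^{2/s}$ and again let $s\to\infty$; the computation $su=s\big(1-e^{(2/s)\ln(2/e)}\big)\to2\ln\frac e2$, together with $su^2\to0$, gives $-\frac s2\ln(1-uv^2)\to\frac{su}{2}v^2=\left(\ln\frac e2\right)v^2$. As $g(v)$ lies below every member of this second family, the limit delivers $g(v)\le\left(\ln\frac e2\right)v^2$. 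Exponentiating the chain $\frac{v^2}{6}\le\ln\!\big(A(a,b)/I(a,b)\big)\le\left(\ln\frac e2\right)v^2$ is precisely the claimed inequality.

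The one delicate point is the passage to the limit: neither constant is produced by a single admissible pair $(u,s)$, but only as the $s\to\infty$ limit of the boundary family, so I must verify that the strict inequalities $g(v)\gtrless-\frac s2\ln(1-uv^2)$ hold for \emph{all} large $s$ before taking limits to get the (non-strict) estimates. This is immediate, since each boundary pair meets the hypothesis of Lemma~\ref{lm22} exactly, and the residual expansions are routine. It is worth noting that both constants equal $\lim\frac{su}{2}$ along the respective boundary, which is exactly what unifies the two halves of the statement.
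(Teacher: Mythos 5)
Your proof is correct and is essentially the paper's own argument: the paper applies Theorem~\ref{th31} at the extremal parameters $p_s$ and $q_s$, rewrites $A/Q_{t,s}$ via Lemma~\ref{lm21} as $\left(1-(1-2t)^2v^2\right)^{-s/2}$, and lets $s\to\infty$, which is exactly your pair of bounds $g(v)\gtrless-\tfrac s2\ln(1-uv^2)$ at $u=\tfrac1{3s}$ and $u=1-(2/e)^{2/s}$ followed by the same limit. The only difference is that you route directly through Lemma~\ref{lm22} and the functions $f_{u,s}$ instead of through the means $Q_{t,s}$, which is a purely notational change given Remark~\ref{rm24}.
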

\begin{proof} Indeed, for $s\geq 1$ let
\[
p_s=\frac{1}{2}-\frac{1}{2}\sqrt{1-\left(\frac{2}{e}\right)^{2/s}}
\qquad\hbox{and}\qquad
q_s=\frac{1}{2}-\frac{1}{2\sqrt{3s}}.
\]

Using Theorem~\ref{th31}, for every $(a,b)\in{\mathcal D}$, we have
\[
Q_{p_s,s}(a,b)<I(a,b)<Q_{q_s,s}(a,b).
\]
This can be written as follows
\[
\frac{A(a,b)}{Q_{q_s,s}(a,b)}<\frac{A(a,b)}{I(a,b)}<\frac{A(a,b)}{Q_{p_s,s}(a,b)},
\]
and using Lemma~\ref{lm21}, it is equivalent to
\[
\left(1-\frac{v^2}{3s}\right)^{-s/2}<\frac{A(a,b)}{I(a,b)}
<\left(1-\left(1-\bigg(\frac{2}{e}\bigg)^{2/s}\right)v^2\right)^{-s/2}
\]
where $v=(a-b)/(a+b)$. Now letting $s$ tend to $+\infty$ we obtain
\[
e^{v^2/6}\leq \frac{A(a,b)}{I(a,b)}\leq e^{\left(\ln\frac{e}{2}\right)v^2},
\]
which is the conclusion of  Corollary~\ref{cor35}.
\end{proof}

\bigskip
In fact, because of the ``limit argument'' in the proof of Corollary~\ref{cor35}, we lost the strict inequalities for
distinct positive real arguments. But,
studying the family of functions $(g_t)_{t\in(0,+\infty)}$ defined by
\[
g_{t}(x)=1-\frac{1}{2x}\ln\left(\frac{1+x}{1-x}\right)-\frac{1}{2}\ln(1-x^2)-t x^2,
\]
using similar arguments to those used in Lemma~\ref{lm22}, we can prove the following exact version of
Corollary~\ref{cor35}, which extends the results of Seiffert \cite{sei} and those of S\'andor and Trif \cite{san}.

\bigskip

\begin{theorem} The necessary and sufficient condition on $p,q$ from $(0,+\infty)$ to have
\[
\forall\,(a,b)\in{\mathcal D},\qquad \exp\left(p\left(\frac{a-b}{a+b}\right)^2\right)<\frac{A(a,b)}{I(a,b)}
<\exp\left(q\left(\frac{a-b}{a+b}\right)^2\right)
\] 
is that $p\leq \frac{1}{6}$ and $q\geq \ln(\frac{e}{2})$.
\end{theorem}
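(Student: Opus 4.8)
The plan is to reduce the statement to the single family $(g_t)_{t>0}$ already isolated just before the theorem, and then to run a variational analysis of $g_t$ entirely parallel to the treatment of $f_{u,s}$ in Lemma~\ref{lm22}. First I would set $v=(a-b)/(a+b)$ and recall that $v$ ranges over $(0,1)$ as $(a,b)$ ranges over ${\mathcal D}$. By Lemma~\ref{lm21}(b),
\[
\ln\left(\frac{A(a,b)}{I(a,b)}\right)=1-\frac{1}{2}\ln(1-v^2)-\frac{1}{2v}\ln\left(\frac{1+v}{1-v}\right),
\]
so that for every real $t$ one has $\ln\big(A(a,b)/I(a,b)\big)-tv^2=g_t(v)$. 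Hence the left inequality $\exp(pv^2)<A/I$ for all $(a,b)\in{\mathcal D}$ is equivalent to $g_p(x)>0$ for all $x\in(0,1)$, and the right inequality $A/I<\exp(qv^2)$ is equivalent to $g_q(x)<0$ for all $x\in(0,1)$. It therefore suffices to establish:
\begin{enumerate}[\upshape(a)]
\item $g_t>0$ on $(0,1)$ if and only if $t\leq 1/6$;
\item $g_t<0$ on $(0,1)$ if and only if $t\geq \ln(e/2)$.
\end{enumerate}

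To analyze $g_t$ I would record its two boundary values and differentiate twice. One checks $\lim_{x\to0^+}g_t(x)=0$ and, after cancelling the logarithmic singularities, $\lim_{x\to1^-}g_t(x)=\ln(e/2)-t$. Differentiating gives $g_t'(x)=k_t(x)/x^2$ with $k_t(x)=\tfrac12\ln\big(\tfrac{1+x}{1-x}\big)-x-2tx^3$ and $k_t(0)=0$, and a second differentiation produces
\[
k_t'(x)=\frac{x^2}{1-x^2}\big(1-6t+6tx^2\big).
\]
Thus the sign of $k_t'$ is governed by the trinomial $1-6t+6tx^2$, exactly as the sign of $h_{u,s}'$ was governed by $T_{u,s}$ in Lemma~\ref{lm22}.

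The argument then splits on the sign of $1-6t$. If $t\leq1/6$, then $1-6t+6tx^2\geq0$ on $[0,1)$, so $k_t$ increases from $k_t(0)=0$ and is positive on $(0,1)$; hence $g_t$ is increasing with $g_t(0^+)=0$, giving $g_t>0$ on $(0,1)$ and proving sufficiency in (a). If $t>1/6$, the trinomial has a single root $x_0\in(0,1)$, so $k_t$ decreases then increases with $k_t(x_0)<0$ and $k_t(1^-)=+\infty$; therefore $k_t$ has a unique zero $y_0\in(x_0,1)$, and $g_t$ decreases on $(0,y_0)$ and increases on $(y_0,1)$. In this regime $g_t<0$ near $0$, which both rules out positivity (forcing $t\leq1/6$ in the necessity of (a)) and shows $g_t<0$ on $(0,y_0]$; moreover $g_t$ increases on $(y_0,1)$ toward its unattained right-endpoint limit $\ln(e/2)-t$, so $g_t<0$ on all of $(0,1)$ precisely when $\ln(e/2)-t\leq0$. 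This yields both directions of (b), once one notes that the remaining subcase $t\leq1/6<\ln(e/2)$ gives $g_t>0$ and hence also obstructs negativity.

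The main obstacle I anticipate is the boundary limit $\lim_{x\to1^-}g_t(x)=\ln(e/2)-t$: the terms $\tfrac{1}{2x}\ln\big(\tfrac{1+x}{1-x}\big)$ and $\tfrac12\ln(1-x^2)$ each diverge, and one must verify that their $\ln(1-x)$ singularities cancel while the finite remainder equals $1-\ln2$. Everything else is a routine sign-chase on the trinomial $1-6t+6tx^2$, mirroring the treatment of $T_{u,s}$ in Lemma~\ref{lm22}.
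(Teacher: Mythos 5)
Your proposal is correct, and it follows exactly the route the paper indicates: the paper itself only sketches this theorem by saying the family $(g_t)$ should be analyzed ``using similar arguments to those used in Lemma~\ref{lm22},'' and your computation of $k_t$, the governing trinomial $1-6t+6tx^2$, and the boundary limits $g_t(0^+)=0$ and $g_t(1^-)=\ln(e/2)-t$ correctly fills in those details. All the formulas check out (in particular $g_t'(x)=k_t(x)/x^2$ with $k_t(x)=\tfrac12\ln\bigl(\tfrac{1+x}{1-x}\bigr)-x-2tx^3$ and $k_t'(x)=\tfrac{x^2}{1-x^2}(1-6t+6tx^2)$), and the case split on the sign of $1-6t$ delivers both equivalences as claimed.
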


\parindent=0pt

\bigskip

\end{document}